\newcommand{\mbf}[1]{\mbox{\boldmath $#1$}}
\def\bA{\mathbf{A}}
\def\bP{\mathbf{P}}
\def\bR{\mathbf{R}}
\def\bu{\mathbf{u}}
\def\bW{\mathbf{W}}
\def\bX{\mathbf{X}}
\def\bG{\mathbf{G}_{n}}
\def\bD{{\mathbf D}}
\global\let\AddToReset=\@addtoreset}
\newtheorem{lem}{Lemma}[section]
\newtheorem{thm}{Theorem}[section]
\theoremstyle{remark}
\newtheorem{rem}{Remark}[section]
\newcommand{\cB}{{\cal B}}
\newcommand{\cD}{{\cal D}}
\newcommand{\cL}{{\cal L}}
\newcommand{\cN}{{\cal N}}
\newcommand{\cQ}{{\cal Q}}
\newcommand{\cW}{{\cal W}}
\def\bc{\begin{center}}
\def\bd{\begin{description}}
\def\be{\begin{enumerate}}
\def\ec{\end{center}}
\def\ed{\end{description}}
\def\ee{\end{enumerate}}
\def\ben{\begin{equation}}
\def\benn{\begin{equation*}}
\def\een{\end{equation}}
\def\eenn{\end{equation*}}
\def\benr{\begin{eqnarray}}
\def\eenr{\end{eqnarray}}
\def\benrr{\begin{eqnarray*}}
\def\eenrr{\end{eqnarray*}}
\def\al{\alpha}
\def\b{\beta}
\def\del{\delta}
\def\edt{\end{document}}
\def\G{\boldsymbol{\Gamma}}
\def\iny{\infty}
\def\la{\lambda}
\def\noi{\noindent}
\def\r{\ref}
\def\ra{\rightarrow}
\def\si{\sigma}
\def\sti{\sum_{i=1}^n}
\def\stj{\sum_{j=1}^p}
\def\stk{\sum_{k=1}^n}
\def\vep{\varepsilon}
\def\eps{\epsilon}
\def\wh{\widehat}
\def\wt{\widetilde}
\def\R{{\mathbb R}}
\def\bxi{\mbf{x}_{i}}
\def\bSn{\mbf{S}_{n}}
\def\bqi{\mbf{q}_{i}}
\def\B{\boldsymbol{\beta}}
\def\W{\boldsymbol{\cW}}
\def\S{\boldsymbol{\Sigma}}
\def\bet{\boldsymbol{\eta}}
\def\bbxi{\boldsymbol{\xi}}
\def\bL{\boldsymbol{\Lambda}}
\def\bO{\boldsymbol{\Omega}}
\def\tbf{\textbf}
\def\mE{\mathbb{E}}
\def\mP{\mathbb{P}}
\def\mR{\mathbb{R}}
\newcounter{Qcounter}
\newcounter{show}
\newcommand{\inhy}[1]{\int{#1}dH(y) }
\title{Estimation of the rate parameter of the probability distribution on the regression setup}
\author{Jiwoong Kim\\
University of South Florida}
\begin{document}
\maketitle
\begin{abstract}
When the rate parameter of the exponential distribution is associated with predictors, then main interest will be how to estimate the regression parameter. In this paper, we will investigate how to estimate the parameter on the regression setup of the exponential distribution. To that end, we propose a new estimator, and its asymptotic properties will be discussed.
\end{abstract}
\noi
Keywords: Cramer-von Mises, exponential distribution, minimum distance, survival analysis

\section{Introduction}

In the literature on statistical theory and probability distributions, the exponential distribution is popular for several reasons. For example, it will not be an exaggeration to claim that its distribution function is, de facto, the second simplest, being ranked after that of the uniform distribution. Consequently, this feature of simplicity makes the exponential distribution more practical and useful, and the domain of its application
has quickly expanded to many other disciplines by addressing real-world problems.
Among those disciplines, the exponential distribution paved the way and laid the foundation for survival analysis, which models the time to the occurrence of a specific event. Having been the most popular with medical science, survival analysis has also drawn attention from many non-medical disciplines, such as social science, engineering, and physics. In survival analysis, \textit{hazard rate} -– which indicates the frequency of deaths -– can be expressed as $f/(1-F)$ with $f$ and $F$ being density and distribution functions, respectively.


From the constant rate $\lambda$, survival analysis derives two critical curves used to understand a population's lifespan. The probability density function
measures the absolute probability of the event occurring at exactly time $t$. It drops off sharply over time, showing that as time goes on, fewer individuals remain alive to experience the event. On the contrary, the survival function 
measures the probability that a subject will survive longer than time $t$. It starts at 1 (100\% survival at time zero) and decays smoothly toward 0, which shows the relationship between exponential distribution and the survival distribution.

An interesting questions arises when the unknown rate parameter is associated with some predictors, which extends the one sample exponential distribution to the regression setup. Well-known example will be the Cox proportional hazards (Cox-PH, or simply Cox) model, which assumes the constant rate $\la$ over time. While real-world biological systems or mechanical parts often wear out over time—violating the constant hazard assumption—the exponential distribution remains the vital baseline model. It acts as the mathematical benchmark from which more complex survival models. Minimum distance (MD) method is known to possess many desirable properties, such as asymptotic normality and robustness of the resulting estimator. As \cite{Koul2002} extended the MD methodologies from estimation of the location parameter of one sample to that of the parameter on regression setup. Being analogous to what he had done, we will extend the MD method from the single rate parameter estimation to the regression parameter estimation of the Cox proportional hazard model.


\section{Minimum distance estimation}\label{Sec:MDE}
\subsection{Literature review}
During the 1970s and 1980s, many statisticians -- e.g., \cite{Koul1970}, \cite{Millar1984}, \cite{Donoho88a}, and \cite{Donoho88b} -- have conducted research on the MD estimation since it was proposed by \cite{Wolfowitz1953}: see also references in \cite{Koul2002}. The distance function measures the discrepancy between the observed random sample and the assumptions underlying the theories. More specifically, the distance function –- which contains the parameter of interest as an argument -- computes the numeric difference between the empirical function and the modeled function, constructed from observed data and assumptions, respectively. Then, the MD estimation method, as the name implies, seeks the optimal value that minimizes the distance function. Researchers investigated the resulting estimator after employing various distance functions. For example, \cite{Beran} used the Hellinger distance using empirical and modeled density functions. Among many research works on the distance function in the literature of the MD estimation, \cite{Parr} demonstrated that the MD estimator obtained from the Cramer von-Mises (CvM) distance function exhibits better robustness than those obtained from other distance functions.

In the past two decades, however, no more rigorous research has been conducted; only a few studies have further investigated MD estimation. \cite{Kim2018} proposed a novel algorithm to compute the MD estimator, while \cite{Kim2020} demonstrated that the MD estimator maintains the desirable asymptotic properties under the assumption of independent observations even when independence doesn’t hold. Applying the MD method to a discrete distribution, \cite{Kim2026} demonstrated that the MD estimator still retains asymptotic properties and robustness, thereby comparing favorably with other well-celebrated estimators, including the ML estimator.

One of the fundamental reasons the popularity of the MD method has been waning rapidly is the complexity of its distance function. The empirical distribution function, which is a collection of indicator functions of the observed sample and the parameter of interest, is the main culprit obstructing the search for the optimal solution to the distance function. Since the indicator function is not smooth, it is not differentiable with respect to the parameter; unlike the ML estimation, the closed-form expression for the solution does not exist. Therefore, research on MD estimation should rely on computationally expensive numerical methods to solve the optimization problem. \cite{Kim2026} modified the distance function so that the empirical distribution function contains only the observed sample, while the modeled distribution function contains the parameter of interest. As a result, the distance function became smooth and differentiable with respect to the parameter, thereby reducing the computational cost. 
Findings in \cite{Kim2026}, however, are limited in that the modified distance function is valid only for a discrete probability distribution, the estimation problem of which is presumed to be less challenging than that of a continuous probability distribution function. In this article, we extend his approach to estimating the regression parameter of this study.

\subsection{The distance function of MD estimation}\label{Sec:CvM_distance_function}
Let $f$ and $F$ denote the probability density and distribution functions with a rate parameters, respectively. For example, $f$ and $F$ of the exponential distribution with a rate parameter $\la$ will be $f(y;\la):=\la e^{-\la y}$ and $F(y;\la):=1-e^{-\la y}$, respectively, for real $y\in \mathbb{R}$, while those of the Wiebull distribution with a known shape parameter $\al$ will have $f(y;\la)=\la\al (\la y)^{\al-1}e^{-(\la y)^{\al}}$ and $F(y;\la) = 1-e^{(\la y)^{\al}}$. Let $g(y,\la):=\partial F(y;\la)/\partial \la$. Note that $f(y;\la) = g(\la,y)$, and hence, $f$ and $g$ share many common features such as smoothness and differentiability. In the literature of MD estimation theories, $f$ has played in the crucial role, such as determining the asymptotic variance of the resulting MD estimator; however, in this study, $g$ will take the role of $f$.

Consider a sample of independent observations, $Y_{1},...,Y_{n}$ whose distribution function is $F$ with different rate parameters $\la_{1},...\la_{n}$. Assume that the rate parameter $\la_{i},\,\,1\le i \le n$ are exponentially associated with a predictor $\mbf{x}_{i}\in \mathbb{R}^{p}$, that is, $\la_{i}=e^{\mbf{x}_{i}'\B_{0}}$. Real-world examples of statistical models on this setup include survival analysis, as mentioned in the introduction, where $Y_{i}$ represents survival time of the $i$th individual, while a hazard rate, $\la_i$, is affected by the predictors of the individual, including age, income, etc. To estimate the parameter $\B_{0}\in \mathbb{R}^{p}$, we first define the distance function $\cL(\B)$ for $\B\in \mathbb{R}^{p}$ with an integrating measure $H$ as follows:
\ben\label{eq:distance_function}
\cL(\B) = \stj\int\left[ \sti d_{ij}\Big\{\textrm{I}(Y_{i}\le y)-F(y;\mbf{x}_{i}'\B) \Big\} \right]^{2}\,dH(y),
\een
where $\textrm{I}(\cdot)$ is an indicator function, and $d_{ij}\in \mathbb{R},\,1\le i\le n,\,1\le j\le p$ are real numbers. Note that the distance function can be adjusted through the choice of the weights $d_{ij}$, which is one of the main merits of the MD estimator. For example, \cite{Koul2002} demonstrated that proper choice of the weights will give a rise to the most efficient estimator for the given $F$. In addition, \cite{Kim2026} used the uniform weights and showed that the distance function can be stabilized against certain impact from outliers and hence yield more robust estimator than other methods. As will be shown later, choice of $d_{ij}$ that yields the optimal result will not be fortuitous in this study; rather the weight will be methodically selected.

Next, we define the MD estimator belonging to the neighborhood of the true parameter. More precisely, the MD estimator will solve the following optimization problem
\ben\label{eq:opt}
\cL(\widehat{\B}) = \inf_{\B\in\mathbb{R}^{p}}\cL(\B).
\een
where infimum is taken over $\cN_{b}(\B_{0}):=\{\B\in\mathbb{R}^{p}: \bA^{-1}|\B-\B_{0}|\le b<\iny\}$ for some $p\times p$ matrix $\bA$.

Unfortunately, the optimal solution to the above problem doesn't have any closed-form expression, which is the most stumbling block to finding the MD estimator. However, the  concept that the distance function will be \textit{uniformly locally asymptotically quadratic} (ULAQ) over $\cN(\B_{0})$ ensures that the MD estimator can be searched by numerical methods using its gradient. Furthermore, it possesses the desirable properties, including asymptotic normality and robustness. In the literature of the MD estimation, it is not exaggeration to state that the ULAQ conditions play the most crucial role to obtain the MD estimator and its asymptotic properties; the successful investigation of the MD estimation hinges on satisfying the ULAQ conditions. The assumptions (\tbf{U1})-(\tbf{U3}) below state the ULAQ conditions required for this study.
\begin{itemize}
  \item[\textbf{(U.1)}] There exist a sequence of $p\times 1$ random vector $\mbf{S}_{n}(\B_{0})$ and a sequence of $p\times p$ real matrix $\bW_{n}(\B_{0})$
   such that for all $0<b<\iny$
   \benn
   \sup_{|\del_{n}(\b-\b_{0})|\leq b}\left|\cL(\B)- \cL(\B_{0})-2(\B-\B_{0})'\mbf{S}_{n}(\B_{0})-(\B-\B_{0})'\bW_{n}(\b_{0})(\B-\B_{0})\right|=o_{p}(1).
   \eenn
  \item[\textbf{(U.2)}] For all $\vep>0$, there is a $0<z_{\vep}<\iny$ such that
  \benn
    \mP\big(|\cL(\B_{0})|\leq z_{\vep} \big)\geq 1-\vep.
  \eenn
  \item[\textbf{(U.3)}] For all $\vep>0$ and $0<c<\iny$, there is a $0<b<\iny$ and $N$ -- both depending on $\vep$ and $c$ -- such that
  \benn
    \mP\Big( \inf_{\del_{n}|\B-\B_{0}|>b} |\cL(\B)|> c \Big)\geq 1-\vep, \qquad \textrm{ for all }n\geq N.
  \eenn
\end{itemize}
The following lemma reproduces Theorem 5.4.1 from \cite{Koul2002}, which gives a clue for the asymptotic distribution of the resulting MD estimator that satisfies the ULAQ conditions.
\begin{lem}\label{lem:koul_asym_distr_Sn}
Assume that $\cL$ satisfies the ULAQ assumptions (\tbf{U.1})- (\tbf{U.3}). Let $\cB_{n}:=\bA\bW_{n}(\B_{0})\bA$ where $\bA$ is used to define $\cN_{b}(\B_{0})$. Let $\wh{\B}$ denote the MD estimator that solves the optimization problem in (\ref{eq:opt}). Then,
\benn
\cB_{n}\bA^{-1} (\wh{\B}-\B_{0}) = -\bA\mbf{S}_{n}(\B_{0})+o_{p}(1).
\eenn
\end{lem}
\noi
The lemma says the asymptotic distribution of the MD estimator will be determined by that of $\bA\mbf{S}_{n}$; thus, the asymptotic normality of $\bA\mbf{S}_{n}$ -- which will be proven in the next section -- will vouch for that of the MD estimator.

\subsection{MD estimator of $\B$ and its asymptotical properties}
\setcounter{Qcounter}{0}
Consider $n$ pairs of observations, $(Y_{1}, \mbf{x}_{1}'),...,(Y_{n}, \mbf{x}_{n}')$ where $\mbf{x}_{i}\in \mR^{p}$. Let $\bX$ be an $n\times p$ design matrix whose $i$th row vector is $\mbf{x}_{i}'$.
Using $d_{ij}$, $1\le i\le n,\,1\le j\le p$, the same weights in (\ref{eq:distance_function}), define an $n\times p$ matrix $\bD:=((d_{ij}))$. As mentioned in the previous section, $\bD$ will be selected after the meticulous investigation so that we can obtain the optimal MD estimator. It is a matter of course that $\bD$ will be related with the design matrix $\bX$ as demonstrated in other studies. For example, \cite{Koul2002} proved that $\bD \varpropto \bX$ -- that is, $\bD$ is $\bX$ multiplied by a non-singular $p\times p$ matrix obtained from $\bX$ -- will yield the most efficient estimator when estimating the regression parameter of linear models with independent observations. As shown later, the optimal $\bD$ of this study turns out to be much more complicating than that of \cite{Koul2002}.

Let $\bqi(\cdot,\B):=\partial F(\cdot;\bxi'\B)/\partial \B$. For the exponential $F$, we have $\bqi(y, \B) =  ye^{-y\bxi'\B}e^{\bxi'\B}\bxi$. Recall $\la_i=e^{\bxi'\B}$ and $g(y,\la)=ye^{-\la y}$. Using $\la_{i}$ and $g$ will simplify $\bqi(y,\B) =g(y,\la_{i})\la_{i}\bxi$. To denote that $g(\la_{i};y)\la_{i}$ is indeed a function of $\B$, let $g(\la_{i};y)\la_{i}:=g_{i}(y,\B)$. With all these notations, we are ready to state the necessary assumptions for this study. It should be admitted that most of the assumptions have a root in \cite{Koul2002}.
\begin{itemize}
\item[(\tbf{a.\addtocounter{Qcounter}{1}\theQcounter})] Let $\textbf{B}$ denote an $n\times n$ symmetric, positive definite matrix. Then, $\bX'\textbf{B}\bX$ is nonsingular. In addition, there exists a $p\times p$ nonsingular matrix $\bA$ such that $\bA = (\bX'\textbf{B}\bX)^{-1/2}$.
\item[(\tbf{a.\addtocounter{Qcounter}{1}\theQcounter})] For all $1\le j\le p$, the following hold true: $\sti d_{ij}^{2}=1$ and $\max_{1\le i\le n}d_{ij}=o(1)$.
\item[(\tbf{a.\addtocounter{Qcounter}{1}\theQcounter})] Let $\mbf{c}_{i}:=\bA \mbf{x}_{i}$. Then $\max_{1\le i\le n}\|\mbf{c}_{i}\|=o(1)$.
\item[(\tbf{a.\addtocounter{Qcounter}{1}\theQcounter})] For $1\le j\le p$, $\sti \|d_{ij}\mbf{c}_{i}\| = O(1)$.
\item[(\tbf{a.\addtocounter{Qcounter}{1}\theQcounter})]
    Let $\la_{i}^{0}:=\bxi'\B_{0}$. With $g(y,\la)=\partial F(y;\la)/\partial \la$, the following holds:
    \benn
    \max_{1\le i\le n} \sup\int_{0}^{\iny}\{\la(1-y\la)g(y,\la)\}^2\,\,dH(y)<\iny,
    \eenn
    where the supremum is taken over $\la$ between $\la_{i}^{0}$ and $\la_{i}$.
\item[(\tbf{a.\addtocounter{Qcounter}{1}\theQcounter})] Consider $F(\cdot;\bxi'\B)$, the df of the random variable $Y_{i}$. Then,
    \benn
    \stj\inhy{\sti d_{ij}^{2} F(y;\bxi'\B)\{1-F(y;\bxi'\B)\}}=O(1).
    \eenn

\item[(\tbf{a.\addtocounter{Qcounter}{1}\theQcounter})] Let $\bG(y,\B)$ be an $n\times n$ diagonal matrix whose $i$th entry is $g_{i}(y,\B)$. Then a $p\times p$ matrix $\G_{n}(y,\B):=\bD'\bG(y,\B)\bX\bA$ is nonsingular for $y\in \mR$ and $\B\in \mR^{p}$.

\item[(\tbf{a.\addtocounter{Qcounter}{1}\theQcounter})] Let $\boldsymbol{\gamma}_{j}(y,\B)\in \mR^{p},\,1\le j\le p$ denote the $j$th column vector of $\G_{n}(y,\B)$. Then the following is true:
    \benn
    \stj\inhy{\|\boldsymbol{\gamma}_{j}(y,\B)\|^r}<\iny, \quad \textrm{ for }r=1,2.
    \eenn

\item[(\tbf{a.\addtocounter{Qcounter}{1}\theQcounter})] Let $\mbf{e}\in \mathbb{R}^{p}$ be a unit vector, that is, $\|\mbf{e}\|=1$. With $\G_{n}(y,\B)$, let $\G_{H}:=\inhy{ \G_{n}(y,\B)l(y)}$ where $l:\mR\ra\mR$ is such that $\inhy{l^{2}(y)}<\iny$. Let $k_{n}(\mbf{e}):=\mbf{e}'\G_{H}\mbf{e}$. Then there exists an $\alpha>0$ such that
    \benn
    \liminf_{n}\big\{\inf\{k_{n}(\mbf{e}):\mbf{e}\in \mathbb{R}^{J}\}\big\}\ge \alpha.
    \eenn
\item[(\tbf{a.\addtocounter{Qcounter}{1}\theQcounter})] For all $1\le k\le n$ and for all unit vectors $\mbf{e}\in \R^{p}$,  either $\mbf{d}_{k}'\mbf{e}\mbf{x}_{k}'\bA\mbf{e}\ge 0$ or $\mbf{d}_{k}'\mbf{e}\mbf{x}_{k}'\bA\mbf{e}\le 0$ holds true.

\end{itemize}
\begin{rem}
For the exponential $F$, $g(y,\la) = ye^{-\la y}$. With $H(y)\equiv y$, the integral of the equation in \tbf{(a.5)} will be simplified to
\benn
\int_{0}^{\iny}\{\la(1-y\la)g(y,\la)\}^2\,\,dy\le 2\int_{0}^{\iny}\la^2(1+y^2\la^2)y^2e^{-2\la y}\,\,dy= \frac{4}{\la},
\eenn
where the inequality readily follows from $(a-b)^{2}\le 2(a^2+b^2)$ for real $a,b\in \mR$, and hence, the assumption \tbf{(a.5)} is equivalent to $\min_{1\le i\le n}\{\la_{i}^{0},\la_{i}\}>0$. Using the probability df for the integrating measure, for example, $H(y)=F(y)$, the left-hand side (LHS) of the equation of the claim will be bounded by $14/81$, and hence, the assumption will be trivially met, regardless of $\la$.
\end{rem}
To conserve the space, let $F_{i}(\cdot;\B): = F(\cdot;\mbf{x}_{i}'\B)$. Define $\W:=(\cW_{1},...,\cW_{j})'\in \mathbb{R}^{p}$ where the $j$ th entry is
\benn
\cW_{j}(y, \B):= \sti d_{ij}\Big\{\textrm{I}(Y_{i}\le y)-F_{i}(y;\B) \Big\}.
\eenn
Note that the distance function can be written as
\benn
\cL(\B)=\stj \inhy{\cW_{j}(y, \B)^2} = \inhy{\W(y, \B)'\W(y, \B)}.
\eenn
Next, we shall verify that the following $\bSn$ and $\bW_{n}$ satisfy the ULAQ conditions:
\benn
\bSn(\B)=-\stj \inhy{\cW_{j}(y,\B)\sti d_{ij}\bqi(y,\B)},\,\,
\bW_{n}(\B)= \stj \inhy{ \sti\stk d_{ij}d_{kj}\bqi(y,\B)\mbf{q}_{k}(y,\B)'}.
\eenn
Subsequently, define a quadratic function
\benn
\cQ(\B)=\cL(\B_{0})+2(\B-\B_{0})'\bSn(\B_{0})+(\B-\B_{0})'\bW_{n}(\b_{0})(\B-\B_{0}).
\eenn
Recall $\bG$ and $\G_{n}$ from the assumption \tbf{(a.7)}. Replacing $\bqi(\B;y)$ with $g_{i}(\B;y)\bxi$, rewrite $\bSn$ and $\bW_{n}$ in a matrix form as follows:
\ben\label{eq:SandW_Matrix}
\bSn(\B) = \bA^{-1}\inhy{ \G_{n}(y,\B)'\W(y,\B)}, \quad \bW_{n}(\B) = \bA^{-1}\inhy{ \G_{n}(y,\B)'\G_{n}(y,\B)}\bA^{-1}.
\een
Recall $\cN_{b}(\B_{0})=\{\B\in\mathbb{R}^{p}: \bA^{-1}|\B-\B_{0}|\le b<\iny\}$ and the ULAQ conditions. The first ULAQ condition implies the distance function $\cL$ can be uniformly approximated by the quadratic function $\cQ$ over $\cN_{b}(\B_{0})$, which is demonstrated by Theorem \ref{thm:ulaq1}. Before proceeding to the theorem, we shall prove the next lemma, which will be used for the proof of the theorem.
\begin{lem}\label{lem:mean_value_theorem_F}
For $0<b<\iny$,
\benn
\sup_{\B\in\cN_{b}(\B_{0})} \stj \inhy{\left[\sti d_{ij}\left\{F_{i}(y;\B)-F_{i}(y; \B_{0})-(\B-\B_{0})'g_{i}(y,\B_{0})\bxi\right\}\right]^{2}} =o(1).
\eenn
\end{lem}
\begin{rem}
The above lemma is analogue of the assumption (i) of Section 5.5 from \cite{Koul2002}. The difference between two originates from the fact that $f_{i}$ is replaced by $g_{i}$, as mentioned earlier.
\end{rem}
\begin{proof}
Let $\mbf{u} = \bA^{-1}(\B-\B_{0})$. Note that
$F_{i}(y;\B)-F_{i}(y;\B_{0})=(\B-\B_{0})'\bxi g_{i}(y,\wt{\B})$,
where $\wt{\B}=c\B+(1-c)\B_{0}$ for some $c\in (0,1)$. Let $\wt{\la}_{i}=\bxi'\wt{\B}$ and $\la_{i}^{0}=\bxi'\B_{0}$.
Also, observe that
\benn
|g_{i}(y,\wt{\B})-g_{i}(y,\B_{0})|\le \left|(\wt{\B}-\B_{0})'\frac{\partial g_{i}(y,\B)}{\partial \B}\right|\le \|\bu\|\cdot \|\bA \bxi\|\cdot |\la_{i}^{*}(1-y\la_{i}^{*})g(\la_{i}^{*};y)|,
\eenn
where $\la_{i}^{*}$ lies between $\la_{i}^{0}$ and $\wt{\la}_{i}$; the mean value theorem readily implies the first inequality, while the second inequality follows from $|\bxi'(\wt{\B}-\B_{0})|\le |\bxi'(\B-\B_{0})|$. Recall $\mbf{c}_{ni}$ from the assumption \tbf{(a.2)}. Finally, for $\|\bu\|\le b$,
\benrr
(\textrm{LHS of the equation})
&\le& \sup_{\|\bu\|\le b}\stj\inhy{\left[ \sti d_{ij}\bu'\mbf{c}_{ni}\{g_{i}(y,\wt{\B})-g_{i}(y,\B_{0})\} \right]^2},\\
&\le& p\cdot b^4\left(\sti \|d_{ij}\mbf{c}_{ni}\|\right)^2 \max_{1\le i\le n} \sup_{\la}\int_{0}^{\iny}\{\la(1-y\la)g(\la;y)\}^2\,\,dH(y)
    \rightarrow0,
\eenrr
where the convergence to 0 follows from \tbf{(a.2)}-\tbf{(a.5)}, thereby completing the proof of the lemma.
\end{proof}
Consider a vector-valued function $\mbf{a}(y):=(a_{1}(y),...,a_{p}(y))\in \mR^{p}$ whose entries are functions of $y\in \mR$, that is, $a_{j}:\mR\ra\mR$ for all $1\le j\le p$. Let $\|\mbf{a}\|_{H}^{2}$ denote its $L_{2}$ norm
\benn
\|\mbf{a}\|_{H}^{2} := \inhy{\mbf{a}'(y)\mbf{a}(y)}=\stj\inhy{ a_{j}(y)^2}.
\eenn
Let $\mbf{\kappa}:=(\kappa_{1},...,\kappa_{p})'\mR^{p}$ whose $j$th entry is the integrand of the equation in the above lemma, that is,
\benn
\kappa_{j}(y,\B):= \sti d_{ij}\left\{F_{i}(y;\B)-F_{i}(y; \B_{0})-g_{i}(y,\B_{0})(\B-\B_{0})'\bxi\right\},
\eenn
and hence, the lemma can be written as $\sup_{\|\bu\|\le  b} \|\mbf{\kappa}\|_{H}^{2}=o_{p}(1)$. In what follows, we will use the $L_{2}$ norm notation to conserve space. Next theorem serves to demonstrate that the first ULAQ condition holds.
\begin{thm}\label{thm:ulaq1}
Suppose that assumptions \tbf{(a.1)}-\tbf{(a.8)} hold. Then, the distance function $\cL$ in (\ref{eq:distance_function}) satisfies \textbf{(U.1)}, that is, for any $0<b<\iny$,
\benn
   \mathbb{E}\Big(\sup|\cL(\B)- \cQ(\B)|\Big)=o(1),
\eenn
where the supremum is taken over $\cN_{b}(\B_{0})$.
\end{thm}

\begin{proof} Recall $\bG$ and $\G_{n}$ from \tbf{(a.7)}, and rewrite $\W=\W(y,\B)$ and $\W_{0}=\W(y,\B_{0})$. Note that
\benn
\W = \W_{0}-(\B-\B_{0})'\bX'\bG\bD-\mbf{\kappa},
\eenn
and hence, with $\bu=\bA^{-1}(\B-\B_{0})$,
\benrr
|\cL(\B) - \cQ(\B)|&=& \|\W\|_{H}^{2} - \|\W_{0}-(\B-\B_{0})'\bX'\bG\bD\|_{H}^{2}\\
&\le& \|\mbf{\kappa}\|_{H}^{2}+2\|\mbf{\kappa}\|_{H}\Big[
\|\W_{0}\|_{H}+\|\bu'\G_{n}'\|_{H}
\Big]\longrightarrow 0,
\eenrr
where the inequality follows from applying the Cauchy-Schwarz inequality to the cross product terms after expanding $\|\W\|_{H}^{2}$. Note that \tbf{(a.6)} and \tbf{(a.8)} readily imply $\|\W_{0}\|_{H}<\iny$ and
\benn
\sup_{\|\bu\|\le b}\|\bu'\G_{n}\|_{H}^{2}\le b^2\stj\inhy{\|\mbf{\gamma}_{j}(y)\|^2}<\iny,
\eenn
respectively. Finally,  $\|\mbf{\kappa}\|_{H}^{2}=o(1)$ from Lemma \ref{lem:mean_value_theorem_F} completes the proof of the theorem.
\end{proof}
\noi
\begin{lem}\label{lem:asym_bound}
In addition to the assumptions of Theorem \ref{thm:ulaq1}, suppose the assumptions \textbf{(a.9)} and \textbf{(a.10)} hold. Then, \textbf{(U.2)}-\textbf{(U.3)} hold true.
\end{lem}
\begin{proof}
The assumption \tbf{(a.6)} implies that $\mE|\cL(\B_{0})|<\iny$, and hence, \textbf{(U.2)} will follow from Chevyshev's inequality. Next, let $\bu:=\bA^{-1}(\B-\B_{0})\in\mR^{p}$. As done in Lemma 5.5.4 from \cite{Koul2002}, define $V_{j}(\bu)=\inhy{\cW_{j}(y, \B_{0}+\bA\bu)l(y)}$ and $\wh{V}_{j}(\bu)=\inhy{\{\cW_{j}(y, \B_{0})+\bu'\bA\bR_{j}(y,\B_{0})\}l(y)}$ for $1\le j\le p$. Subsequently, define two vectors $\mbf{V}(\bu):=(V_{1},...,V_{p})'\in\mR^{p}$ and $\wh{\mbf{V}}(\bu):=(\wh{V}_{1},...,\wh{V}_{p})'\in\mR^{p}$. Let $\bu=r\mbf{e}$ where $r>0$ and $\mbf{e}\in\mR^{p}$ is a unit vector. Using these variables, it can be shown that both $\mbf{e}\mbf{V}(\bu)'$ and $\mbf{e}\wh{\mbf{V}}(\bu)'$ are monotone in $\|\bu\|$ under \textbf{(a.9)} and \textbf{(a.10)}. Also, for $b<\iny$, the analogue of (5.5.26) from \cite{Koul2002}
\benn
\sup_{\|\bu\|\le b}\|\mbf{V}(\bu)-\wh{\mbf{V}}(\bu) \|=o_{p}(1)
\eenn
will follow from Lemma \ref{lem:mean_value_theorem_F}, $\inhy{l^2(y)}<\iny$ in \tbf{(a.9)}, and the Cauchy-Schwarz inequality. Then, using the monotonicity of $\mbf{e}\mbf{V}(\bu)'$ and $\mbf{e}\wh{\mbf{V}}(\bu)'$, the claim for \tbf{(U.3)} can be shown as in \cite{Koul2002}.
\end{proof}
\noi
Ascertaining that the ULAQ conditions are met, we proceed to prove the asymptotic normality of the MD estimator. We first specify the asymptotic distribution of $\bSn$ and convergence of $\bW_{n}$ in Lemma \ref{lem:koul_asym_distr_Sn}. Then, the subsequent application of Lemma \ref{lem:koul_asym_distr_Sn} will yield the desired result, which is another main result of this paper; see, e.g., Theorem \ref{thm:asymp_normality}.
Let $\wt{\G}_{n}(\B):=\inhy{\G_{n}(\B;y)'\G_{n}(\B;y)}$.
\begin{lem}\label{lem:asymp_distr_S}
Assume \tbf{(a.1)}-\tbf{(a.10)}. In addition, assume that
\benn
\lim_{n\ra \iny}\wt{\G}_{n}=\wt{\G},
\eenn
where $\wt{\G}$ is positive-definite.
Then $\bA\bSn(\B_{0})$ is asymptotically normally distributed, and $\bA\bW_{n}\bA$ converges to $\wt{\G}$ as $n$ approaches $\iny$.
\end{lem}
\noi
\begin{proof}
The claim for $\bW_{n}$ readily immediately follows from (\ref{eq:SandW_Matrix}) and the assumption. Note that for a real-valued function $\ell:\mR\ra\mR$,
\benn
\inhy{\ell(y)\Big\{I(Y_{i}\le y)-F_{i}(y;\B_{0})\Big\}} = \int_{Y_{i}}^{\iny}\ell(y)dH(y)-\mE\left(\int_{Y_{i}}^{\iny}\ell(y)dH(y)\right).
\eenn
Recall the $j$th column vector of $\G_{n}$: $\mbf{\gamma}_{j}=(\gamma_{1j},...,\gamma_{pj})'\in\mR^{p}$. For any $\mbf{b}=(b_{1},...,b_{p})'\in \mR^{p}$, we have
\benrr
\mbf{b}'\bA\bSn(\B_{0}) &=& \sti\stj d_{ij} \sum_{l=1}^{p}b_{l}\inhy{\gamma_{lj}(y,\B_{0})\Big\{I(Y_{i}\le y)-F_{i}(y;\B_{0})\Big\}},\\
&=& \sti\stj d_{ij} \sum_{l=1}^{p}b_{l}\Big( \psi_{lj}(Y_{i}) - \mE\{\psi_{lj}(Y_{i})\} \Big),\\
&=& \sti \xi_{i},\qquad (say),
\eenrr
where $\psi_{lj}(Y_{i}):=\int_{Y_{i}}^{\iny}\gamma_{lj}(y,\B_{0})dH(y)$. Note that \tbf{(a.8)} implies $|\psi_{lj}(Y_{i})|<\iny$, which implies $|\xi_{i}|$ is bounded by $c\,\max |d_{ij}|$ for some constant $c<\iny$. We shall show that the Lindeberg-Feller (L-F) condition for $\mbf{b}'\bA\bSn$ will be satisfied. $\mE{\xi_{i}}=0$ is clear. Let $\si_{i}^{2}=Var(\xi_{i})$ and $\tau_{n}=\sti \si_{i}^{2}$. Thus, we have for any $\epsilon>0$,
\benrr
\frac{1}{\tau_{n}^{2}}\sti \mE[\xi_{i}^{2}:|\xi_{i}|>\epsilon \tau_{n}]&\le & c\tau_{n}^{-2}\left(\max_{1\le i\le n,1\le j\le p}d_{ij}^{2}\right)\sti \mP(|\xi_{i}|>\epsilon \tau_{n}) \\
&\le & c\epsilon^{-2}\tau_{n}^{-2}\left(\max_{1\le i\le n,1\le j\le p}d_{ij}^{2}\right) \ra 0,
\eenrr
where the second one is immediate after application of the Chevyshev inequality to the summand, while the convergence to zero follows from \tbf{(a.2)}, thereby showing that the L-F condition is met. Define an $n\times n$ diagonal matrix $\L_{n}$ whose $i$th entry is $F_{i}(y)(1-F_{i}(y))$. Note that
\benrr
Var(\mbf{b}'\bA\bSn)&=&\mbf{b}'\mE\inhy{\G_{n}'\W\W'\G_{n}} \mbf{b},\\
&=&\mbf{b}'\inhy{\G_{n}'\bD'\bL_{n}\bD\G_{n}} \mbf{b},\\
&=&\mbf{b}'\bO_{n} \mbf{b},\quad (say),
\eenrr
where the second last equality follows from the Fubini's theorem, and $\mE(\W\W')=\bD'\bL_{n}\bD$. Thus, the Cramer-Wold device will yield the asymptotic normality of $\bA\bSn$
\benn
\bO_{n}^{-1/2}\bA\bSn\Rightarrow_{\cD}N(\mbf{0}_{p\times 1}, \mathbf{I}_{p\times p}).
\eenn

\end{proof}
\noi
Finally, we conclude this section by stating the  asymptotic normality of the MD estimator. Recall $\wt{\G}_{n}(\B_{0})$ and $\bO_{n}(\B_{0})$ in Lemma \ref{lem:asymp_distr_S}. Define $\S_{n}(\B_{0}) := \wt{\G}_{n}^{-1}\bO_{n}\wt{\G}_{n}^{-1}$.
\begin{thm}\label{thm:asymp_normality}
Suppose the assumptions of Lemma \ref{lem:asymp_distr_S} hold. Then the MD estimator $\widehat{\B}$ in (\ref{eq:opt}) will be asymptotically normally distributed
\benn
\S_{n}^{-1/2}\bA^{-1}(\wh{\B}-\B_{0})\Rightarrow_{\cD}N(\mbf{0}_{p\times 1}, \mathbf{I}_{p\times p}).
\eenn
\end{thm}
\begin{proof} Note that the ULAQ conditions are met by
Theorem \ref{thm:ulaq1} and Lemma \r{lem:asym_bound}, and hence, Lemma \ref{lem:koul_asym_distr_Sn} accompanied by Lemma \ref{lem:asymp_distr_S} will immediately imply the claim, thereby completing the proof of the theorem.
\end{proof}

\subsection{Asymptotic variance of the MD estimator}
We will find the asymptotic variance of $\wh{\B}$ when the integrating measure in the distance function is the Lebesgue measure, that is, $H(y)\equiv y$. To begin with, we will find $\wt{\G}_{n}$. Let $\mathbf{C}_{H}:=\int \bG\bD^{*}\bG\,dH(y)$ and hence,
\benn
\wt{\G}_{n}(\B) = \bA\bX'\left(\int \bG(y,\B)\bD^{*}\bG(y,\B)dH(y) \right) \bX\bA = \bA\bX'\mathbf{C}_{H}\bX\bA.
\eenn
Let $c_{ij}$ denote the $(i,j)$th entry of $\mathbf{C}_{H}$. Note that
\ben\label{eq:c_ij}
c_{ij} =
d_{ij}^{*}\int g_{i}(y,\B) g_{j}(y,\B)dH(y) = \left\{
  \begin{array}{ll}
    \frac{2d_{ij}^{*}\la_{i}\la_{j}}{(\la_{i}+\la_{j})^3}, & \hbox{if $H(y)\equiv y$;} \\
    \frac{2d_{ij}^{*}\la_{*}\la_{i}\la_{j}}{(\la_{*}+\la_{i}+\la_{j})^3}, & \hbox{if $H(y)=1-e^{-\la_{*}y}$.}
  \end{array}
\right.
\een
Next, proceed to get $Var(\bSn) = \mE(\bSn\bSn')$. Recall the diagonal matrix $\bP_{n}$. Let $\bP_{H}:=\int \bG\bD^{*}\bP_{n} \bD^{*}\bG\,dH(y)$ and $p_{ij}$ denote its $(i,j)$th entry. Observe that
\benrr
p_{ij}(\B) &=& \sum_{k=1}^{n}d_{ik}^{*}d_{kj}^{*}\la_{i}\la_{j}\int y^2e^{-(\la_{i}+\la_{j})y}F_{k}(y;\B)\{1-F_{k}(y;\B)\}dH(y)\\
&=& \left\{
\begin{array}{ll}
    \sum_{k=1}^{n}d_{ik}^{*}d_{kj}^{*}\la_{i}\la_{j}\left(\frac{2}{(\la_{i}+\la_{j}+\la_{k})^3}-\frac{2}{(\la_{i}+\la_{j}+2\la_{k})^3}\right), & \hbox{if $H(y)\equiv y$;} \\
   \sum_{k=1}^{n}d_{ik}^{*}d_{kj}^{*}\la_{*}\la_{i}\la_{j}\left(\frac{2}{(\la_{*}+\la_{i}+\la_{j}+\la_{k})^3}-\frac{2}{(\la_{*}+\la_{i}+\la_{j}+2\la_{k})^3}\right), & \hbox{if $H(y)=1-e^{-\la_{*}y}$.}
  \end{array}
\right.
\eenrr
Let $\eta_{i}(y,\B):=\textrm{I}(Y_{i}\le y)-F_{i}(y;\B)$ and $\bet(y,\B)=(\eta_{1}(y,\B),...,\eta_{n}(y,\B))'\in \mR^{n}$. Observed that $\W(y,\B)=\bD'\bet$, and  $\bSn$ can be expressed in a matrix form, namely, $\bSn=-\bX'\int \bG\bD^{*}\bet(y,\B)dy$ where $\bD^{*}=\bD\bD'$. Finally, we have
\benrr
\bO_{n} &=& \mE(\bA\bSn\bSn'\bA),\\
&=& \bA\bX'\left(\int \bG(y,\B)\bD^{*}\mE\left[\bet\bet'\right] \bD^{*}\bG dy\right)\bX\bA,\\
&=& \bA\bX'\left(\int \bG(y,\B)\bD^{*}\bP_{n} \bD^{*}\bG dy\right)\bX\bA,\\
&=& \bA\bX'\bP_{H}\bX\bA.
\eenrr
Thus, the aymptotic variance of the MD estimator $AVar(\wh{\B})$ can be written as
\benn
AVar(\wh{\B}) = (\bX'\mathbf{C}_{H}\bX)^{-1}(\bX'\bP_{H}\bX)(\bX'\mathbf{C}_{H}\bX)^{-1}.
\eenn

\subsection{Robustness of the MD estimator}
Next, we will investigate the robustness of $\wh{\B}$. To that end, we will first find its influence function. Note that $\bSn(\B)$ can be rewritten as
\benrr
\bSn(\B) &=& \sti \inhy{\cW_{i}^{*}(y,\B) \bqi(y,\B)},\\
&=& \sti \psi(Y_{i}, \bxi, \B), \quad (say),
\eenrr
where $\cW_{i}^{*}(y,\B):=\stj d_{ij}\cW_{j}(y,\B)$. Since $\partial \cL/\partial \B=-2\bSn$, the MD estimator $\wh{\B}$ will solve
\benn
\sti \psi(Y_{i}, \bxi, \B)=0.
\eenn
Let $\textrm{IF}(Y_{i}, \bxi, \B)$ denote the influence function of the MD estimator. We shall compute the influence function of the MD estimator by directly applying the formula from \citet[p.\,101]{Hampel1986} to $\psi$ as follows:
\benn
\textrm{IF}(Y_{i},\bxi;\B) = -\left(\mE\left[\frac{\partial \psi(Y_{i}, \bxi;\B)}{\partial \B}\right] \right)^{-1} \psi(Y_{i}, \bxi;\B).
\eenn
Since $\mE \cW_{i}^{*}=0$ and $\partial \cW_{j}/\partial \B=\sum_{h=1}^{n}g_{h}(y,\B)\mbf{x}_{h}$, the application Fubini's theorem yields
\benn
\mE\left[\frac{\partial \psi(Y_{i}, \bxi;\B)}{\partial \B}\right] = -\sum_{h=1}^{n}d_{hi}^{*} \bxi\mbf{x}_{h}' \inhy{g_{i}(y,\B)g_{h}(y,\B)},
\eenn
where $d_{hi}^{*}=\stj d_{ij}d_{hj}$. Consider $H(y)\equiv y$. Then, (\ref{eq:c_ij}) directly implies
\benn
\mE\left[\frac{\partial \psi(Y_{i}, \bxi;\B)}{\partial \B}\right] = -\frac{1}{2}\sum_{h=1}^{n}c_{ih} \bxi\mbf{x}_{h}'
\eenn
Observe that
\benrr
\psi(Y_{i}, \bxi, \B)&=&\bxi \sum_{h=1}^{n}d_{ih}^{*}\int g_{i}(y,\B)\left[\textrm{I}(Y_{i}\le y)-F_{i}(y;\B)\right]\,dH(y),\\
&=& \bxi\xi_{i}(\B),\quad (say),
\eenrr
hence, the influence function can be written as
\benn
\textrm{IF}(Y_{i},\bxi;\B) = -2\left(\sum_{h=1}^{n}c_{ih} \bxi\mbf{x}_{h}'\right)^{-1} \xi_{i}(\B)\bxi.
\eenn
For $H(y)\equiv y$ or $H(y)=1-e^{\la_{*}y}$, $\xi_{i}(\B)$ has an closed-form expression: see, e.g., (\ref{eq:xi_k}). With $H(y)\equiv y$, consider a scalar outlier $x_{i}\in \mR$. When $x_{i}$ approaches $\iny$, which implies $\la_{i}$ approaches $\iny$, we have
\benrr
\textrm{IF}(Y_{i},x_{i};\beta)
&=& -\frac{\sum_{h=1}^{n}d_{ih}^{*}[Y_{h}e^{-\la_{i}Y_{h}}/\la_{i}+e^{-\la_{i}Y_{h}}/\la_{i}^2-\la_{h}(2\la_{i}+\la_{h})/\la_{i}^{2}(\la_{i}+\la_{h})^2]}{\sum_{h=1}^{n}d_{ih}^{*}x_{h}\la_{h}/(\la_{i}+\la_{h})^3},\\
&\approx&-\frac{2\sum_{h\neq i}^{n}d_{ih}^{*}\la_{h}+2d_{ii}^{*}\la_{i}}{\sum_{h\neq i}^{n}d_{ih}^{*}x_{h}\la_{h}+d_{ii}^{*}x_{i}\la_{i}}\rightarrow 0,
\eenrr
where the equality follows from (\ref{eq:xi_k}) while the approximation of the second line follows from the fact that the maximum power of $\la_{i}$ in both numerator and denominator is 3 and $x^{k}e^{-x}$ converges to 0 for all $k\in \mathbb{N}$. Thus, the impact of any outlier on the MD estimation will be limited as shown above. For $H(y)=1-e^{-\la_{*}y}$, the similar result holds. To demonstrate that the assertion is true, we investigate an empirical influence function obtained from randomly generated dataset $(Y_{i}, x_{i}),\,1\le i\le 100$ with true $\beta=0.2$. Letting one of $x_{i}$ increase, we compute $\textrm{IF}(Y_{i},x_{i};\beta)$ using $H(y)\equiv y$ and $H(y)=1-e^{-y}$.
\begin{figure}[h]
\centering
\includegraphics[width=0.5\textwidth]{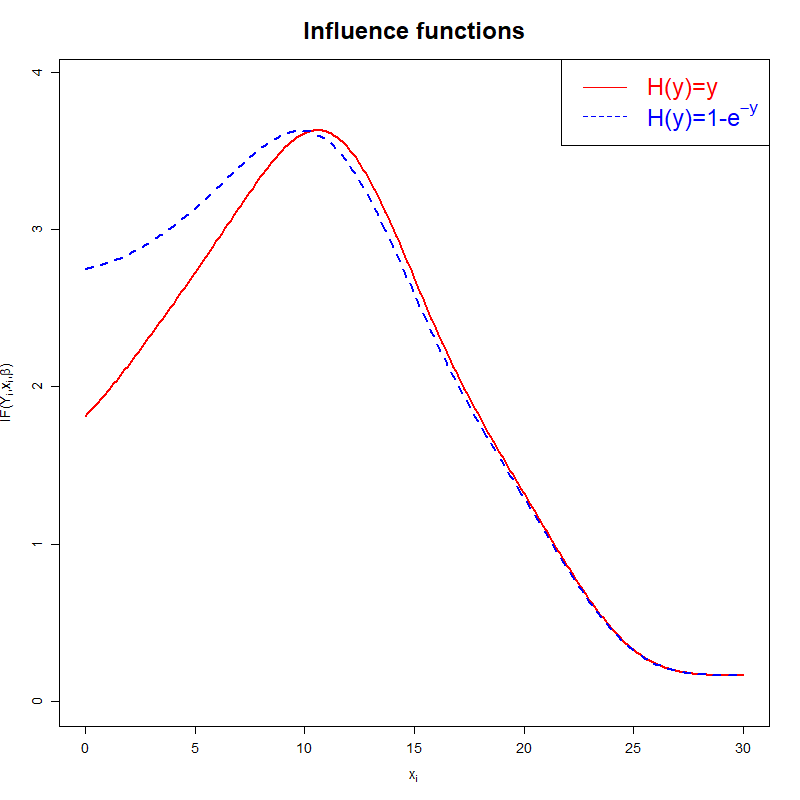}
\caption{Two influence functions when $H(y)=y$ (red) and $H(y)=1-e^{y}$ (blue)}\label{fig:influence_function}
\end{figure}
Figure \ref{fig:influence_function} shows two influence functions corresponding to different integrating measures and serves to illustrate that the impact of the outlier is indeed limited. Note that only significant difference between the two measures arises when $x_{i}$ is relatively small; as $x_{i}$ increases, both influence functions converge to 0, thereby demonstrating the robustness of the MD estimator.

\section{Empirical studies}
\subsection{Computational aspects of MD estimation}
As stated in Section \ref{Sec:CvM_distance_function}, $\cL$ will be a quadratic function as $n$ increases. We will empirically demonstrate that the claim indeed holds true. Recall $\W(y,\B) = \bD' \bet$. With $H(y)\equiv y$, $\cL(\B)$ can be rewritten as
\benrr
\cL(\B) &=& \int \W'(y,\B)\W(y,\B) dy,\\
&=& \stk \sti d_{ki}^{*}\int\eta_{k}(y,\B)\eta_{i}(y,\B)\,dH(y),\\
&=& \stk \sti d_{ki}^{*}\left\{ (Y_{k}\wedge Y_{i})+\frac{e^{-\la_{k}Y_{i}}-1}{\la_{k}}+\frac{e^{-\la_{i}Y_{k}}-1}{\la_{i}} +\frac{1}{(\la_{k}+\la_{i})}\right\},
\eenrr
where $\la_{i}=\bxi\B, 1\le i\le n$ and $\la_{k}=\mbf{x}_{k}\B, 1\le k\le n$. Now, consider the case that $H(y)$ is an exponential df with a rate parameter $\la_{*}$, namely, $H(y)=1-e^{\la_{*}y}$. Then, $\cL(\B)$ has a similar -- albeit more complicated -- expression where
\benrr
\int\eta_{k}(y,\B)\eta_{i}(y,\B)\,dH(y)&=&\frac{\la_{*}}{\la_{*}+\la_{k}}e^{-(\la_{*}+\la_{k})Y_{i}}+\frac{\la_{*}}{\la_{*}+\la_{i}}e^{-(\la_{*}+\la_{i})Y_{k}}-\left(\frac{\la_{*}}{\la_{*}+\la_{k}}+\frac{\la_{*}}{\la_{*}+\la_{i}}\right)\\
&&\quad + \frac{\la_{*}}{\la_{*}+\la_{k}+\la_{i}}-e^{-\la_{*}(Y_{k}\wedge Y_{i})}+1.
\eenrr
Using the above expressions, we will plot the 3 dimensional graph of $\cL(\B)$. More precisely, generate $\bxi\in\mR^{2}$ from a uniform distribution and obtain $\la_{i}$, using $\B_{0}=(-2,3)$. Then generate $Y_{i},\,1\le i\le n$ from the exponential distribution with the rate $\la_{i},\,1\le i\le n$. Using this dataset $(Y_{i},\bxi)$, we will draw $\cL(\B)$ over the neighborhood of $\B_{0}$. Figure \ref{fig:loss} demonstrates $\cL$ is indeed quadratic around the true $\B_{0}$.
\begin{figure}[h]
\centering
\begin{subfigure}[b]{0.45\textwidth}
\includegraphics[width=\textwidth]{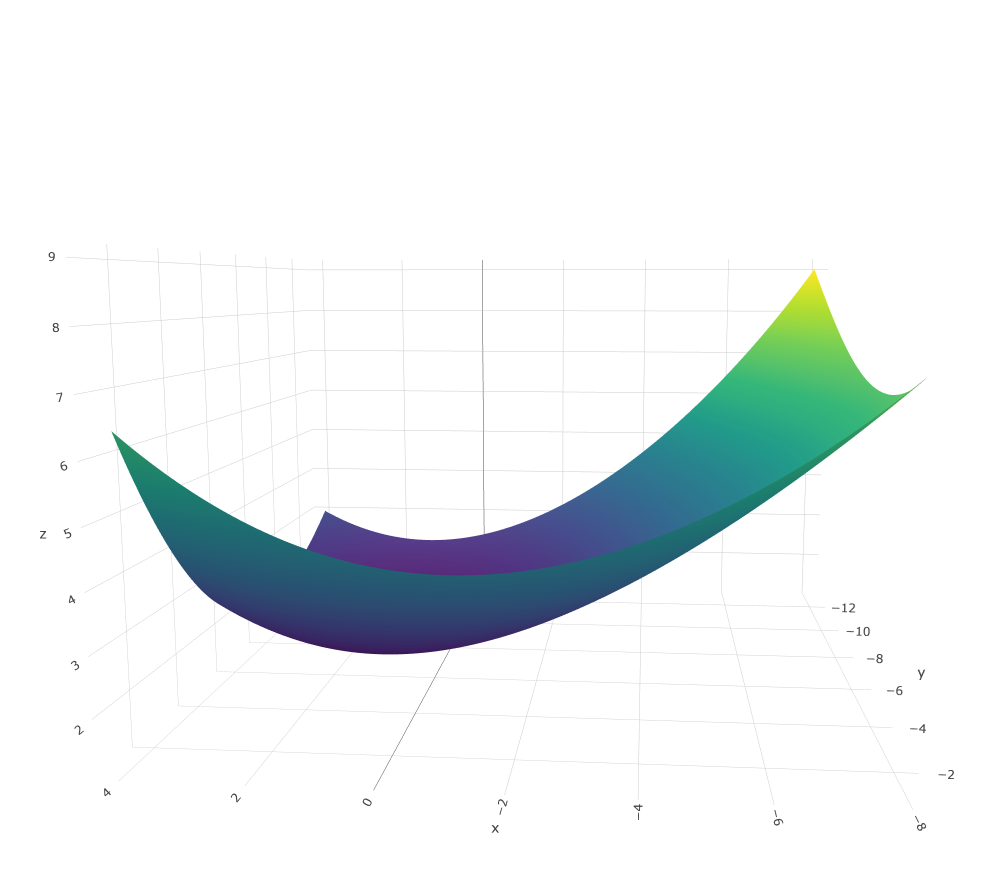}
\end{subfigure}
\begin{subfigure}[b]{0.45\textwidth}
\includegraphics[width=\textwidth]{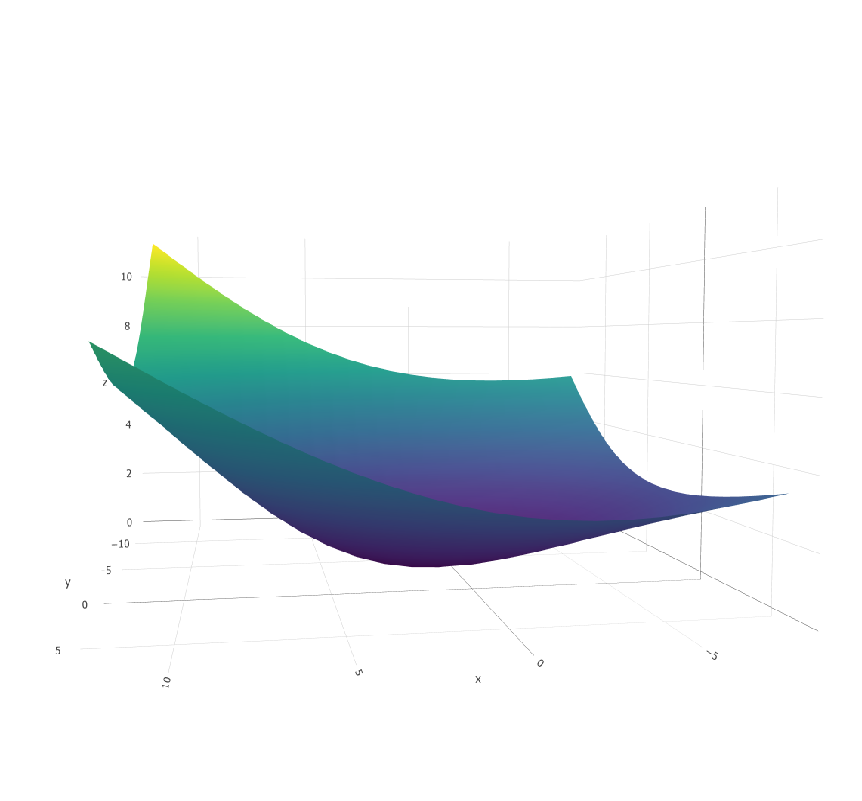}
\end{subfigure}
\caption{3 dimensional plots of $\cL(\B)$ when $H(y)=y$ (left) and $H(y)=1-e^{y}$ (right).}\label{fig:loss}
\end{figure}

Ascertaining the quadraticity of $\cL$, we can safely use the gradient descent (GD) method when computing the MD estimator. Recall $\partial \cL/\partial \B=2\bSn$. Thus, the application of GD method implies
\benn
\wh{\B}^{(i+1)}=\wh{\B}^{(i)}-\Delta\cdot \bSn,
\eenn
where $\wh{\B}^{(i)}$ is the MD estimator in the $i$th stage while $\Delta$ is a learning rate. For the learning rate, we will use $\Delta=0.001$; for the initial value $\wh{\B}^{(i)}$, we will use a vector of zeros, namely, $\mbf{0}\in \mR^{p}$. We are fortunate to see that $\bSn$ has also a closed-form expression, and hence, the MD estimator can be obtained pretty much fast. Recall that $d_{ij}^{*}, 1\le, i,j\le n$ is the $(i,j)$th entry of $\bD^{*}$. Let $\bbxi(\B):= \int \bG\bD^{*}\bet(y,\B)dH(y)$ and $\xi_{k}$ denote its $k$th entry. Note that
\ben\label{eq:xi_k}
\xi_{k}(\B) = \sti d_{ki}^{*}\int g_{k}(y,\B)\eta_{i}(y,\B)dy = \sti d_{ki}^{*}\left[\phi_{k}(Y_{i})-\mE\,\phi_{k}(Y_{i}) \right],
\een
where $\phi_{k}(x):=\int_{x}^{\iny}g_{k}(y, \B)dH(y)$. When $H(y)\equiv y$, a direct calculation yields
\benn
\phi_{k}(x)= xe^{-\la_{k}x}+\frac{1}{\la_{k}}e^{-\la_{k}x},\quad \mE[\phi_{k}(Y_{i})] = \frac{\la_{i}(2\la_{k}+\la_{i})}{\la_{k}(\la_{k}+\la_{i})^2},
\eenn
while using $H(y)=1-e^{-\la_{*}y}$ will yield
\benn
\phi_{k}(x)= \frac{\la_{*}\la_{k}}{(\la_{*}+\la_{k})}xe^{-(\la_{*}+\la_{k})x}+\frac{\la_{*}\la_{k}}{(\la_{*}+\la_{k})^2}e^{-(\la_{*}+\la_{k})x},\quad \mE[\phi_{k}(Y_{i})] = \frac{\la_{*}\la_{k}\la_{i}(2\la_{*}+2\la_{k}+\la_{i})}{(\la_{*}+\la_{k})^2(\la_{*}+\la_{k}+\la_{i})^2}.
\eenn
Next consider a mixture of the two previous measures, namely, $H(y):=ry+(1-r)(1-e^{-\la_{*}y})$ with $0<r<1$. Then, $\phi_{k}(x)$ and $\mE[\phi_{k}(Y_{i})]$ will be linear combinations of those corresponding to the original measures. Let $\boldsymbol{\Phi}$ be an $n\times n$ matrix whose $(i,k)$th entry is $\Phi_{ik} = \phi_{k}(Y_{i})-\mE \phi_{k}(Y_{i})$. Then, $\xi_{k}(\B)$ will be the $(k,k)$th entry of the  matrix $\bD^{*}\boldsymbol{\Phi}$. Note that $\bSn=-\bX'\bbxi$. Thus, we first compute $\bD^{*}$ and $\boldsymbol{\Phi}$, and subsequently obtain $\bbxi$; finally, we obtain $\bSn$ by multiplying $\bbxi$ by $-\bX$. Then, we apply the GD method using $\bSn$ and obtain the MD estimator. We will refer to the MD estimators obtained from using $H(y)\equiv y$, $H(y)=1-e^{\la_{*}y}$, and the mixture of the two measures as MDE1, MDE2, and MDE3, respectively. For MDE3, $r=0.5$ will be used in what follows, unless specified otherwise.

\subsection{Simulation studies}
Using the generated dataset, we compare various MD estimators -- obtained from different integrating measure introduced in the previous sections -- with the Cox estimator. Recall the MD1 and MD2 estimators obtained from $H(y)\equiv y$ and $H(y)=1-e^{-y}$, respectively. In addition, we use a mixture of the two $H$'s -- e.g., $H(y):=0.5 y +0.5(1-e^{-y})$ -- and obtain another MD estimator; we refer to it as the MD3 estimator.

For the dataset, we use $\B=(2,-3)'$ again and generate $\bxi\in \mR^2,\,1\le i\le n$ from a normal distribution with mean of 1 and standard deviation of 0.1, where $n$ is 20, 50, 100, or 200. Finally, we generate $Y_{i}, 1\le i\le n$ using the rate $\la_{i}=\bxi'\B$. With the generated dataset, we compute three MD estimators -- MD1, MD2, and MD3 -- and the Cox estimator; for getting the Cox estimator, we use the R package \texttt{survival}. Finally, we repeat the whole procedure 10,000 times.
For comparison purpose, we use three criteria: bias, standard error (SE), and root mean square error (RMSE).
Table \ref{tbl:comparison} reports the comparison of the four estimators.
\begin{table}[h]
\centering
\small{
\begin{tabular}{|c|c|c c c|c c c|c c c|c c c|}
  \hline
 &  & \multicolumn{3}{|c|}{MD1} & \multicolumn{3}{|c|}{MD2} & \multicolumn{3}{|c|}{MD3} & \multicolumn{3}{|c|}{Cox}\\
\hline
 &$n$ & bias & SE & RMSE & bias & SE & RMSE  & bias & SE & RMSE & bias & SE & RMSE \\
\hline
  \multirow{4}{*}{$\beta_{1}$} &20  &-0.9&1.465&1.719   &-1.561&0.643&1.688 &-1.23&1.034&1.607  &0.11&3.255&3.255\\
&50  &-0.221&1.307&1.324 &-1.014&0.778&1.278 &-0.617&1.018&1.19  &0.182&1.618&1.627\\
&100 &-0.063&1.104&1.105 &-0.572&0.868&1.039 &-0.318&0.962&1.013 &0.023&1.118&1.118\\
&200 &-0.018&0.729&0.729 &-0.168&0.729&0.748 &-0.093&0.711&0.717 &0.009&0.729&0.728\\
  \hline
  \multirow{4}{*}{$\beta_{2}$} &20  &1.209&1.491&1.919  &2.27&0.651&2.361  &1.739&1.048&2.031 &-0.388&3.216&3.238\\
&50  &0.395&1.37&1.426   &1.539&0.831&1.749 &0.967&1.073&1.444 &-0.12&1.712&1.715\\
&100 &-0.003&1.089&1.089 &0.771&0.847&1.145 &0.384&0.94&1.015  &-0.101&1.092&1.096\\
&200 &-0.017&0.802&0.802 &0.225&0.777&0.809 &0.104&0.768&0.774 &-0.065&0.764&0.767\\
  \hline
\end{tabular}
}
\caption{Biases and SEs of the Cox and MD estimators. }\label{tbl:comparison} 
\end{table}
We first interpret the general trends across all estimators and proceed to the estimator-specific analysis. To begin with, it is worth noting that all estimators exhibit the consistency, that is, absolute value of bias tends to decrease as $n$ increases. To reflect the sample size effect, other two measures (SE and RMSE) of all estimators also consistently decrease.

Next, we shall analyze the results separately by estimation. Note that MD1 performs well for relatively larger $n$ in terms of bias. Regarding $\b_{1}$, at $n=$100 and 200, MD1 exhibits the second lowest bias (-0.063 and -0.018, respectively), following Cox's bias (0.023 and 0.009, respectively). For $\b_{2}$, it shows highly accurate bias -- -0.003 and -0.017 at $n=100$ and 200, respectively -- and hence reports the smallest bias among all estimators. MD2 yields the smallest SE across all $n$'s except $n=200$ for both $\b_{1}$ and $\b_{2}$. However, it suffers from the largest bias among all estimators, which indicates the bias-variance tradeoff. Next, consider $MD3$. As expected from the fact that a mixture of two integrating measures of MD1 and MD2 is used to obtain the MD3 estimator, it is not unreasonable to guess that it acts a middle ground between two MD estimator, which turns out to be true. More specifically, its bias, SE, and RMSE typically sit between MD1 and MD2 across all $n$'s for both $\b_{1}$ and $\b_{2}$. Therefore, MD3 shows the balanced performance between MD1 and MD2. Finally, consider the Cox estimator. Regarding the estimation of $\b_{1}$, it outperforms all MD estimators in terms of bias, whereas it suffers from the largest SE for all $n$'s; as a result, Cox yields the largest RMSE for almost all $n$'s, which is another evidence of the bias-variance tradeoff. Regarding the estimation of $\b_{2}$, Cox exhibits the similar pattern; it yields smaller bias, larger SE, and hence, larger RMSE than other MD estimators.
\begin{figure}[h]
\centering
\begin{subfigure}[b]{0.45\textwidth}
\includegraphics[width=\textwidth]{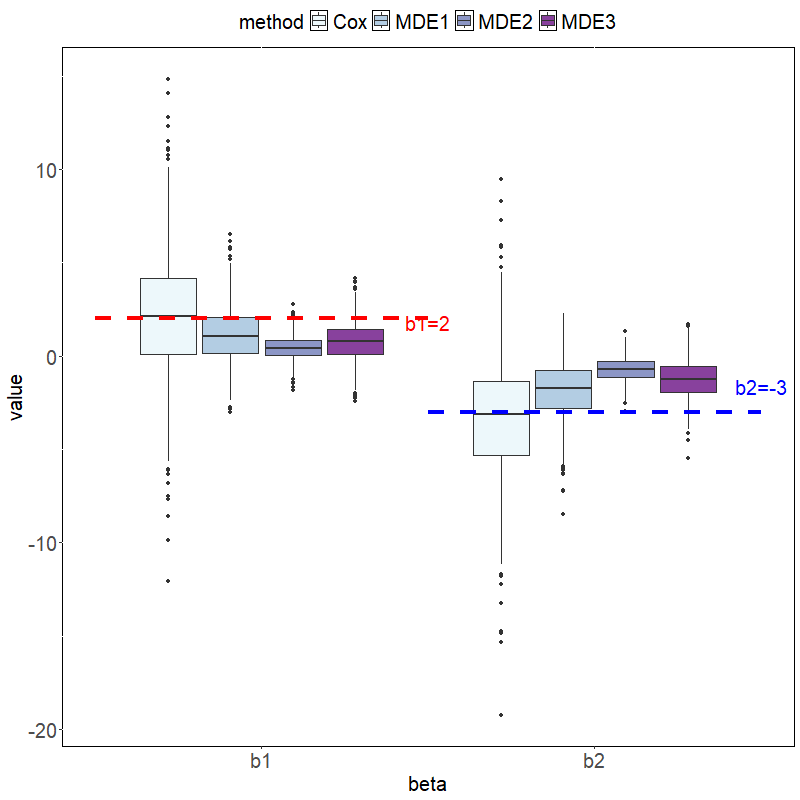}
\end{subfigure}
\begin{subfigure}[b]{0.45\textwidth}
\includegraphics[width=\textwidth]{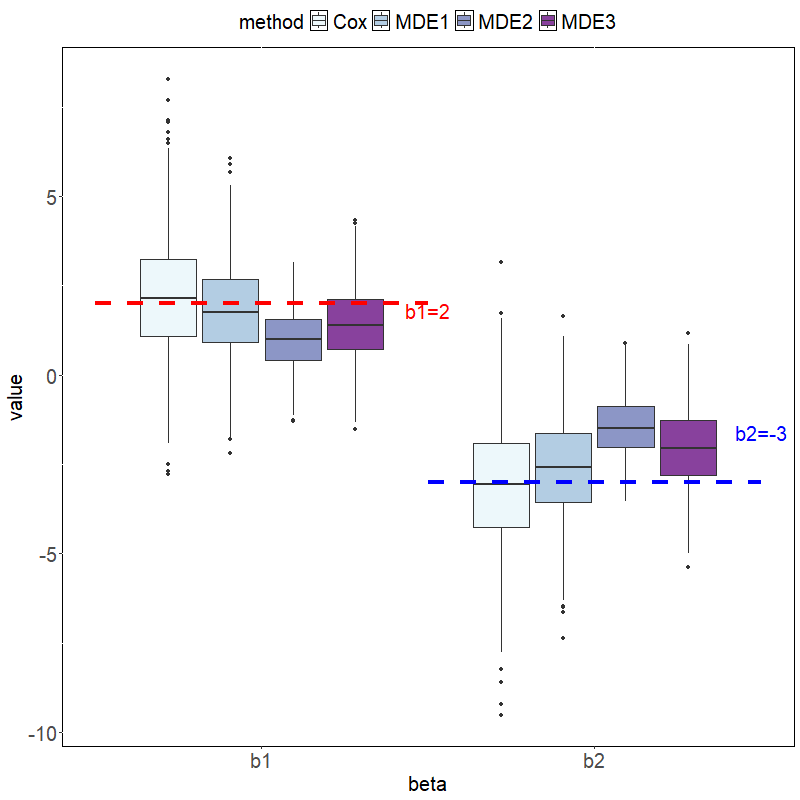}
\end{subfigure}
~
\begin{subfigure}[b]{0.45\textwidth}
\includegraphics[width=\textwidth]{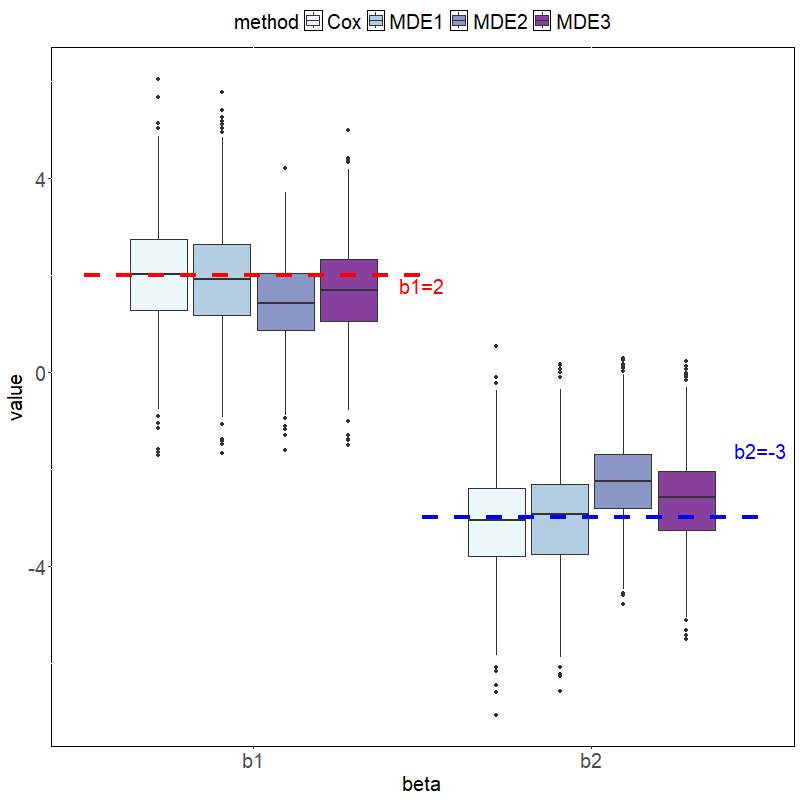}
\end{subfigure}
\begin{subfigure}[b]{0.45\textwidth}
\includegraphics[width=\textwidth]{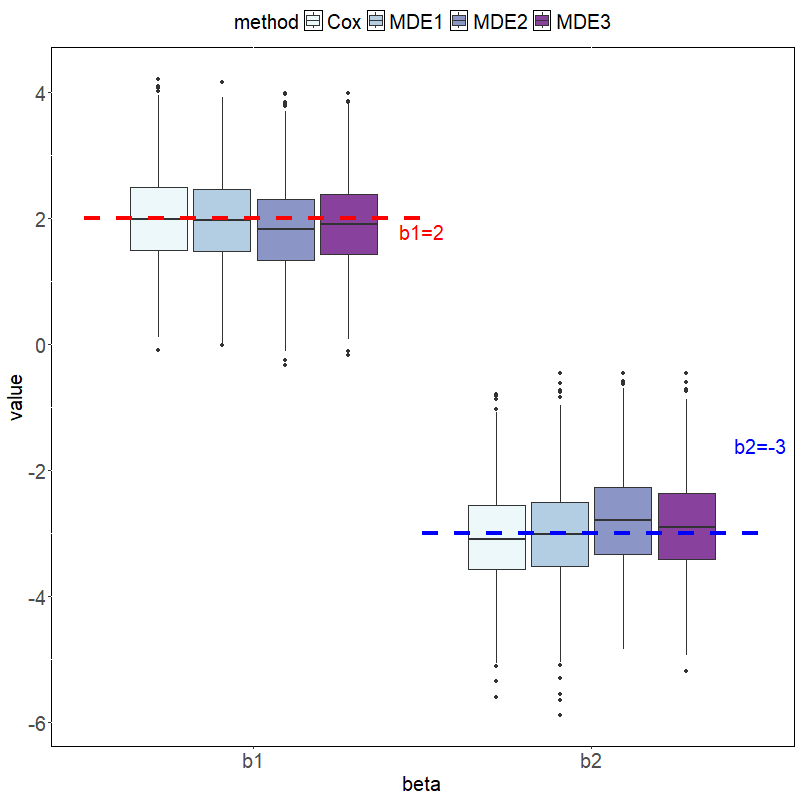}
\end{subfigure}
\caption{Boxplot of estimators when $n=20$ (top-left), 50 (top-right), 100 (bottom-left), and 200 (bottom-right).}\label{fig:boxplot}
\end{figure}
Figure \ref{fig:boxplot} reports boxplots of the four estimators obtained from 10,000 iterations. The boxplots visualize the results of the previous simulation experiment and provide more comprehensive and straightforward interpretation. Note that the results reported in the figure closely accords with those reported in Table \ref{tbl:comparison}; all features reported in the table are also illustrated in the figure, such as  the consistency of all estimators, smaller bias but larger SE of the Cox estimator, the opposite measures of MD estimators, and the balanced performance of MD3 between MD1 and MD2.

\section{Conclusion}\label{Sec:conclusion}
This study applied the MD estimation with the CvM-type distance function, along with a different approach proposed by \cite{Kim2026}, to estimate the parameter of the regression setup of the exponential distribution and demonstrated that the MD estimator still retains desirable properties, such as asymptotic normality and robustness.
The constant rate over time, which is a bit strong and rare assumption in real-world application, is the definite limit of this study. Thus, further extensions of the current study by releasing the constant rate assumption will form the basis for future research.

\bibliographystyle{plainnat}
\bibliography{MDE_ref.bib}

\edt

felicitous, fetching

\newpage
\section{Influence curve}
\begin{lem}
Recall $W(p)$ in Lemma \r{lem:SW}. The influence function of the MD estimator $\hat{p}$ is given by
\benn
IF(x,\hat{p}, F)= 2W(p)^{-1}\sum_{k=0}^{m}m_{k}g_{k,m}(1-p)\{I(x\leq k) - F(k;m,p)\}.
\eenn
\end{lem}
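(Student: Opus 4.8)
The plan is to repeat, with $W(p)$ in the role of $\G(p)$ and $m_{k}g_{k,m}(1-p)$ in the role of $c_{k}(p)=m_{k}g_{k}(1-p)$, the computation sketched for the influence function of $\wh p_{1}$ in Section~\ref{sec:others}. First I would invoke the asymptotic quadraticity of $\cL$ proved in Theorem~\ref{thm1}: on a $\del_{n}^{-1}$-neighborhood of $p_{0}$ the distance $\cL$ coincides, up to $o_{p}(1)$, with the quadratic $\cQ$, whose curvature coefficient satisfies $\del_{n}^{-2}W_{n}(p_{0})\to\G(p_{0})>0$ by Lemma~\ref{lem:SW}(ii); hence the MD estimator $\wh p$ is characterised, asymptotically, by the stationarity equation $\sum_{k=0}^{m}\cW_{d}(k,p)\,\partial_{p}\cW_{d}(k,p)=0$. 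Since $F(k;p)=m_{k}\int_{0}^{1-p}g_{k}(y)\,dy$ gives $\partial_{p}F(k;p)=-m_{k}g_{k}(1-p)=-c_{k}(p)$, and since the choice $d_{ni}=n^{-1/2}$ makes $\sum_{i}d_{ni}=\sqrt n$, we get $\partial_{p}\cW_{d}(k,p)=\sqrt n\,c_{k}(p)$; dropping the immaterial common factor $\sqrt n$ and interchanging the sums over $k$ and $i$, the stationarity equation becomes $\sum_{i=1}^{n}\phi(X_{i},p)=0$ with $\phi(y,p):=\sum_{k=0}^{m}c_{k}(p)\{\mathbf{I}(y\le k)-F(k;p)\}$.

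I would then read $\wh p$ as the value $T(\hat F_{n})$ of the functional $T$ implicitly defined by $\int\phi(y,T(G))\,dG(y)=0$. Fisher consistency at the model is automatic because $\int\mathbf{I}(y\le k)\,dF(\cdot;p)(y)=F(k;p)$ forces $\int\phi(y,p)\,dF(\cdot;p)(y)=0$, i.e.\ $T(F(\cdot;p))=p$. Substituting the contaminated law $F_{\nu}=(1-\nu)F+\nu\zeta_{x}$ into the defining identity gives $(1-\nu)\int\phi(y,T(F_{\nu}))\,dF(y)+\nu\,\phi(x,T(F_{\nu}))=0$; differentiating in $\nu$ at $\nu=0$ and using $\int\phi(y,p)\,dF(y)=0$ yields the standard $M$-functional expansion
\[
IF(x,\wh p,F)=\frac{d}{d\nu}T(F_{\nu})\Big|_{\nu=0}=-\Big(\int\partial_{p}\phi(y,p)\,dF(y)\Big)^{-1}\phi(x,p),
\]
which is exactly formula (2.3.5) of Hampel (1986, p.\,101).

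It remains to evaluate the normalising constant. Differentiating $\phi$ in $p$ and using $\partial_{p}F(k;p)=-c_{k}(p)$ once more gives $\partial_{p}\phi(y,p)=\sum_{k}\dot c_{k}(p)\{\mathbf{I}(y\le k)-F(k;p)\}+\sum_{k}c_{k}(p)^{2}$, and integrating against $F=F(\cdot;p)$ kills the first sum term by term since $\int\{\mathbf{I}(y\le k)-F(k;p)\}\,dF(y)=0$; hence $\int\partial_{p}\phi(y,p)\,dF(y)=\sum_{k=0}^{m}c_{k}(p)^{2}=\tfrac12\sum_{k=0}^{m}m_{k}^{2}g_{k}^{2}(1-p)=\tfrac12 W(p)$. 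Plugging this back, $IF(x,\wh p,F)=2W(p)^{-1}\sum_{k=0}^{m}c_{k}(p)\{\mathbf{I}(x\le k)-F(k;p)\}=2W(p)^{-1}\sum_{k=0}^{m}m_{k}g_{k,m}(1-p)\{\mathbf{I}(x\le k)-F(k;m,p)\}$, which is the claimed expression.

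The main obstacle is the very first reduction — from ``$\wh p$ minimises $\cL$'' to ``$\wh p$ solves $\sum_{i}\phi(X_{i},p)=0$'': this needs the interior minimiser to exist and be consistent (which follows from \textbf{(a.2)}--\textbf{(a.3)} and Lemma~\ref{lem:asym_distr}) and the quadratic approximation of Theorem~\ref{thm1} together with the strictly positive limiting curvature of Lemma~\ref{lem:SW}(ii), so that the stationarity equation genuinely pins down the minimiser. A cleaner route, which I would in fact adopt, is to \emph{define} the MD functional through $\int\phi(y,T(G))\,dG(y)=0$ from the outset and then only check that $\nu\mapsto T(F_{\nu})$ is differentiable at $\nu=0$ and that $\sum_{k}c_{k}(p)^{2}>0$ on $(0,1)$ — both immediate, since $p\mapsto c_{k}(p)$ is smooth and every $c_{k}(p)=m_{k}g_{k}(1-p)$ is strictly positive there. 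Everything downstream of that is routine bookkeeping.
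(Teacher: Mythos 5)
Your argument follows the same route as the paper's own proof: pass from minimizing $\cL$ to the stationarity equation $\sum_{k=0}^{m}\cW_{d}(k,p)\,\partial_{p}\cW_{d}(k,p)=0$, use $\partial_{p}F(k;p)=-m_{k}g_{k}(1-p)=-c_{k}(p)$ to rewrite it as $\sum_{i=1}^{n}\phi(X_{i},p)=0$ with $\phi(y,p)=\sum_{k}c_{k}(p)\{\mathbf{I}(y\le k)-F(k;p)\}$, and then run the M-functional (Hampel (2.3.5)) computation with $\int\partial_{p}\phi\,dF=\sum_{k}c_{k}(p)^{2}=W(p)/2$. The extra structure you supply -- the implicit definition of the functional $T$, Fisher consistency, the remark that the $\dot c_{k}$ terms integrate to zero, and the justification of the passage from minimizer to stationary point via \textbf{(a.2)}--\textbf{(a.3)}, Theorem \ref{thm1} and Lemma \ref{lem:SW}(ii) -- is exactly what the paper compresses into its one-line appeal to ``the similar argument or the direct application of the formula (2.3.5)''; so this is the same approach, carried out more carefully. (One harmless slip: in the chain $\sum_{k}c_{k}(p)^{2}=\tfrac12\sum_{k}m_{k}^{2}g_{k}^{2}(1-p)=\tfrac12 W(p)$ the middle expression should be $\sum_{k}m_{k}^{2}g_{k}^{2}(1-p)$; the two ends still agree because $W(p)=2\sum_{k}m_{k}^{2}g_{k}^{2}(1-p)$.)

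The one genuine defect is your last substitution. You correctly state $IF(x,\wh{p},F)=-\bigl(\int\partial_{p}\phi(y,p)\,dF(y)\bigr)^{-1}\phi(x,p)$ and correctly find $\int\partial_{p}\phi\,dF=W(p)/2>0$, but the minus sign then silently disappears: what actually follows from your displays is
\benn
IF(x,\wh{p},F)=2W(p)^{-1}\sum_{k=0}^{m}c_{k}(p)\bigl\{F(k;p)-\mathbf{I}(x\le k)\bigr\},
\eenn
the negative of what you (and the stated lemma) assert. The sign cannot be recovered elsewhere, since $\partial_{p}F(k;p)=-c_{k}(p)<0$ fixes every sign in the calculation, and it is not a matter of convention: contamination at $z=m$ must inflate $\wh{p}$ (this is also what the positive biases in Tables \ref{tbl:Ftm10} and \ref{tbl:Ftm20} show), so $IF(m,\wh{p},F)>0$, whereas the claimed right-hand side equals $-2W(p)^{-1}\sum_{k<m}c_{k}(p)F(k;p)<0$; similarly, for $m=1$ the MD estimator reduces to $\bar X$, whose influence function is positive at $x=1$, while the claimed formula is negative there. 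For what it is worth, the paper's own proof makes the mirror-image slip -- it writes $\partial p/\partial\eps=[E_{F}(\partial_{p}\phi(X,p))]^{-1}\phi(x,p)$, omitting the minus sign produced by the implicit differentiation -- so your derivation and the paper's land on the same displayed formula only through compensating sign omissions. To be correct, keep your minus sign and state the influence function as above (equivalently, note that the bracket in the lemma should read $\{F(k;m,p)-I(x\le k)\}$).
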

\noi
\textbf{Proof.} From $(\r{eq:rewritten_loss})$, the MD estimator $\hat{p}$ solves
\benn
\sum_{k=0}^{m}\cW_{d}(k, p)\,\, \frac{\partial \cW_{d}(k, p)}{\partial p} =0.
\eenn
Using (\r{eq:modelled_F}), we have
\benn
\frac{\partial}{\partial p}\cW_{d}(k, p) = \sqrt{n}m_{k}g_{m,k}(1-p).
\eenn
Hence, the MD estimator $\hat{p}$ solves
\benn
\sti \phi(X_{i}, p) = 0,
\eenn
where
\benn
\phi(X_{i}, p) = \sum_{k=0}^{m}m_{k}g_{m,k}(1-p)\Big\{I(X_{i}\leq k) - F(k;m,p)\Big\}.
\eenn
For the contaminated distribution $F_{\eps}:=(1-\eps)F+\eps\delta_{x}$, $\hat{p}$ solves
\benn
(1-\eps)E_{F}[\phi(X,p)] + \eps\phi(x,p) = 0.
\eenn
Using $E_{F}[\phi(X,p)]=0$ around $\eps=0$, the differentiation with respect to $\eps$ yields
\benn
\frac{\partial p}{\partial \eps}=
\left[E_{F}\left(\frac{\partial}{\partial p}\phi(X,p)\right)\right]^{-1}\phi(x,p).
\eenn
Note that
\benn
E_{F}\left(\frac{\partial}{\partial p}\phi(X,p)\right) = \sum_{k=0}^{m}m_{k}^{2}g_{m,k}^{2}(1-p)=W(p)/2,
\eenn
thereby completing the proof of the lemma.


\bibliographystyle{plain}
\bibliography{MDE_ref.bib}

\edt

outlandish, outrageous, paltry